\documentclass[12pt,reqno]{amsart}
\usepackage{amssymb}
\usepackage{}
\usepackage{amsfonts}
\usepackage{mathrsfs}
\usepackage{amsmath,amsthm,amssymb,amsfonts,amscd}
\usepackage{mathrsfs}
\usepackage{bbding}
\usepackage{graphicx,latexsym}

\pagestyle{plain}
\marginparwidth    0pt
\oddsidemargin     0pt
\evensidemargin    0pt
\topmargin         0pt
\textheight        21cm
\textwidth         16cm

\setcounter{footnote}{0}

\theoremstyle{plain}
\newtheorem{theorem}{Theorem}[section]
\newtheorem{corollary}{Corollary}[section]

\newtheorem{lemma}{Lemma}[section]

\newtheorem{conjecture}{Conjecture}[section]

\theoremstyle{definition}

\def \N {{\mathbb N}}
\def \Z {{\mathbb Z}}

\def\l{\left}
\def\r{\right}
\def\bg{\bigg}
\def\({\bg(}
\def\){\bg)}
\def\t{\text}
\def\f{\frac}

\def\ls{\leq}
\def\gs{\geq}

\def\eq{\equiv}

\begin{document}
\hbox{Preprint}
\medskip

\title{On the set $\{\pi(kn):\ k=1,2,3,\ldots\}$}

\author{Zhi-Wei Sun}
\address{Department of Mathematics \\ Nanjing University \\ Nanjing 210093 \\Reople's Republic of China}
\email{zwsun@nju.edu.cn}
\author{Lilu Zhao}
\address{School of Mathematics \\ Shandong University \\ Jinan  250100 \\Reople's Republic of China}
\email{zhaolilu@sdu.edu.cn}

\begin{abstract} An open conjecture of Z.-W. Sun states that for any integer $n>1$
there is a positive integer $k\le n$ such that $\pi(kn)$ is prime, where $\pi(x)$
denotes the number of primes not exceeding $x$. In this paper, we show that for any positive integer $n$ the set $\{\pi(kn):\ k=1,2,3,\ldots\}$ contains infinitely many $P_2$-numbers which are products of at most two primes. We also prove that under the Bateman--Horn conjecture the set 
$\{\pi(4k):\ k=1,2,3,\ldots\}$ contains infinitely many primes.
\end{abstract}
\thanks{2020 {\it Mathematics Subject Classification}.
Primary 11N05; Secondary 05A15, 11A07, 11B99, 11N25.
\newline\indent {\it Keywords}. The prime-counting function, $P_2$-numbers, residue classes.
\newline \indent The first and the second authors are supported by the Natural Science Foundation of China (Grant No. 11971222 and 11871187 respectively).}

\maketitle

\section{Introduction}
\setcounter{lemma}{0}
\setcounter{theorem}{0}
\setcounter{corollary}{0}
\setcounter{remark}{0}
\setcounter{equation}{0}

\medskip

For $x\ge0$, let $\pi(x)$ denote the number of primes not exceeding $x$.
For the asymptotic behavior of the prime-counting function $\pi(x)$, by the Prime Number Theorem we have
$$\pi(x)\sim\f x{\log x}\ \ \ \t{as}\ x\to+\infty.$$
Since there are no simple closed formula for the exact values of $\pi(x)$ with $x>0$,
it is difficult to obtain combinatorial properties of the prime-counting function $\pi(x)$.

In 1962, S. Golomb \cite{G}] proved that for any integer $m>1$ there is an integer $n>1$ with $n/\pi(n)=m$ (i.e., $\pi(n)=n/m$).
In 2017 Z.-W. Sun \cite{S17} obtained the following general result: For any $a\in\Z$ and $m\in\Z^+=\{1,2,3,\ldots\}$, we have
$$\pi(n)=\f{n+a}m\ \ \ \t{for some integer}\ n>1$$
if and only if $a\le s_m$,
where
$$s_m:=\max\{km-p_k:\ k\in\Z^+\}=\max\{km-p_k:\ k=1,2,\ldots,\lfloor e^{m+1}\rfloor\}$$
with $p_k$ the $k$-th prime. This implies that for any integer $m>4$
we have $\pi(mn)=m+n$ for some $n\in\Z^+$ (cf. \cite[Corollary 1.2]{S17}).

On Feb. 9, 2014, Z.-W. Sun \cite{S14} made the following conjecture.

\begin{conjecture} \label{conj-Sun}{\rm (Sun \cite[Conjecture 2.1(i)]{S15})} For any integer $n>1$,
there is a positive integer $k\le n$ such that $\pi(kn)$ is prime.
\end{conjecture}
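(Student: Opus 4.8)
The statement to be addressed is Conjecture \ref{conj-Sun}: for every integer $n>1$ there is a positive integer $k\ls n$ with $\pi(kn)$ prime. Since this is an open problem, the following is a strategy I would attempt together with an honest assessment of where it stalls. The plan is to reformulate the conjecture as a covering problem and then seek a counting lower bound. First I would record the elementary equivalence that $\pi(x)=q$ holds exactly when $p_q\ls x<p_{q+1}$, where $p_j$ denotes the $j$-th prime; hence $\pi(kn)$ is prime if and only if the multiple $kn$ lies in one of the intervals $[p_q,p_{q+1})$ with $q$ itself prime. Writing $U:=\bigcup_{q\ \t{prime}}[p_q,p_{q+1})$, the conjecture becomes the assertion that the arithmetic progression $n,2n,\dots,n^2$ meets $U$. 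This reformulation is attractive because it replaces the opaque condition ``$\pi(kn)$ is prime'' by a transparent geometric condition on where the multiples of $n$ fall.

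Next I would establish the heuristic that makes the conjecture plausible and fixes the quantitative target. By the Prime Number Theorem the interval $[p_q,p_{q+1})$ has length $\sim\log p_q$ on average, and summing these gaps over primes $q$ with $p_q\ls N$ gives $|U\cap[1,N]|\sim N/\log N$, so $U$ has density $\sim 1/\log N$ near $N$. If the multiples $kn$ with $1\ls k\ls n$ are equidistributed relative to $U$ on $[n,n^2]$, the expected number of admissible $k$ is $\sim\sum_{k=1}^{n}1/\log(kn)\gg n/\log n$, so one expects roughly $n/\log n$ admissible $k\ls n$, vastly more than the single value the conjecture requires. I would try to convert this into a rigorous lower bound by estimating $\sum_{k=1}^{n}\mathbf 1[\pi(kn)\ \t{prime}]$ from below, extracting a main term of order $n/\log n$ from a Prime-Number-Theorem input and controlling the error arising from the irregularity of the prime gaps.

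The rigorous step is where the difficulty concentrates, and I would split it into two regimes. For small $n$ one verifies the claim directly by computing $\pi(kn)$ for $k\ls n$. For large $n$ one needs the equidistribution of $n,2n,\dots,n^2$ with respect to the prime-gap intervals $[p_q,p_{q+1})$ indexed by primes $q$; following the conditional method used for the set $\{\pi(4k):\ k\gs 1\}$, I would aim to deduce this from the Bateman--Horn conjecture, or from the Riemann Hypothesis together with an equidistribution hypothesis for prime gaps, rather than unconditionally.

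The main obstacle is that the primality of $\pi(kn)$ lies beyond all current techniques. The event ``$\pi(kn)$ is prime'' couples two hard phenomena: the fine distribution of individual prime gaps $p_{q+1}-p_q$, for which we have no unconditional pointwise control, and the joint behaviour, as $k$ varies, of ``$q$ is prime'' with ``$kn\in[p_q,p_{q+1})$''. Sieve methods detect almost-primes but not primes, which is precisely why the companion theorem of this paper produces only $P_2$-numbers, and the circle method does not apply to values of the non-polynomial function $\pi$. I therefore expect the conjecture to remain out of reach unconditionally, with a conditional proof---under Bateman--Horn or a comparably strong hypothesis---being the realistic outcome of this line of attack.
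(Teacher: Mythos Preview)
The statement you were asked to address is presented in the paper as an \emph{open conjecture}, not as a theorem: the paper offers no proof, only numerical verification for $n\le 2\times 10^{7}$ and the remark that it motivates the study of the sets $\mathcal A_n$. There is therefore no ``paper's own proof'' to compare against, and you were right to flag this explicitly and to offer a heuristic strategy rather than a purported proof.

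Your reformulation via $U=\bigcup_{q\ \text{prime}}[p_q,p_{q+1})$ and the density heuristic $|U\cap[1,N]|\sim N/\log N$ are correct and give the expected count $\gg n/\log n$ of admissible $k\le n$; this matches the spirit of the paper's actual results, which study the complement $\mathcal A_n^{c}$ and show $|\mathcal A_n^{c}(X)|\ll_n X/\log X$ via sieve bounds on prime pairs. Your honest assessment of the obstacle---that detecting primality of $\pi(kn)$ couples fine prime-gap information with primality of the index $q$, and that sieves yield only almost-primes---is exactly why the paper settles for $P_2$-numbers in $\mathcal A_n$ unconditionally and needs Bateman--Horn even for primes in $\mathcal A_4$. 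One small correction: the paper's conditional result for $\mathcal A_4$ does not claim anything about the restricted range $k\le n$ required by the conjecture; it only shows infinitely many primes among all $\pi(4k)$, $k\ge 1$. So even under Bateman--Horn the conjecture as stated (with the bound $k\le n$) is not addressed by the paper's methods, and your suggestion that a conditional proof is ``the realistic outcome'' may still be optimistic for the sharp form with $k\le n$.
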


This conjecture was verified by Sun for all $n=2,3,\ldots,2\times10^7$ (cf. \cite{S14}). For $n=10$, among the ten numbers
\begin{gather*}\pi(10)=4,\ \pi(20)=8,\ \pi(30)=10,\ \pi(40)=12,\ \pi(50)=15,
\\\pi(60)=17,\ \pi(70)=19,\ \pi(80)=22,\ \pi(90)=24,\ \pi(100)=25,
\end{gather*}
only $\pi(60)=17$ and $\pi(70)=19$ are prime. Note also that among the $13$ numbers
\begin{gather*}\pi(13)=6,\ \pi(2\times13)=9,\ \pi(3\times13)=12,\ \pi(4\times13)=15,\ \pi(5\times13)=18,
\\\pi(6\times13)=21,\ \pi(7\times13)=24,\ \pi(8\times13)=27,\ \pi(9\times13)=30,
\\ \pi(10\times13)=31,\ \pi(11\times13)=34,\ \pi(12\times13)=36,\ \pi(13\times 13)=39
\end{gather*}
only $\pi(10\times13)=31$ is prime.

 Motivated by Conjecture \ref{conj-Sun}, for any $n\in \Z^{+}$ we introduce the set
\begin{align}\mathcal{A}_n=\{\pi(kn):\ k\in \Z^{+}\}.\end{align}
Clearly, $\mathcal A_1$ coincides with $\N=\{0,1,2,\ldots\}$, and
$\mathcal A_2=\Z^+$ since $\pi(p_j+1)=j$ for all $j=2,3,\ldots$.
As $\lim_{k\to+\infty}\pi(3k)=+\infty$, and $\pi(3(k+1))-\pi(3k)\in\{0,1\}$ for all $k\in\Z^+$,
we see that $$\mathcal A_3=\{m\in\Z^+:\ m\gs \pi(3)\}=\{2,3,\ldots\}.$$
It is not known whether $\mathcal A_4$ contains infinitely many primes.

Throughout this paper, for any $A\subseteq \Z^{+}$ and $x\ge0$, we define
\begin{equation}\label{A}A(x):=\{a\le x:\ a\in A\}.\end{equation}

Now we present our first theorem.
\begin{theorem}\label{theorem0} Let $S\subseteq \Z^{+}$ with
$$\lim_{x\to+\infty}\frac{|S(x)|}{x/\log x}=+\infty.$$
Then, for any $n\in\Z^+$ the set $\mathcal{A}_n$ contains infinitely many elements of $S$.
 \end{theorem}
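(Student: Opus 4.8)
The plan is to exploit the fact that consecutive values of $\pi(kn)$ change by a bounded amount, so that as $k$ runs over $\Z^+$ the sequence $\pi(kn)$ hits essentially every integer in a long run with controlled multiplicity. Fix $n\in\Z^+$. First I would record the elementary gap estimate: by Bertrand's postulate, or more simply by the fact that the interval $(kn,(k+1)n]$ contains at most $n$ integers and hence at most $n$ primes, we have $0\le \pi((k+1)n)-\pi(kn)\le n$ for every $k$. Since $\pi(kn)\to+\infty$ and the jumps are at most $n$, for every sufficiently large integer $M$ there exists $k$ with $\pi(kn)\in\{M-n+1,\dots,M\}$; more importantly, the number of $k\le K$ with $\pi(kn)\le M$ is at most $M$ plus a bounded correction, while $\pi(Kn)\approx Kn/\log(Kn)$, so $\mathcal A_n$ is "dense" in the integers in a quantitative sense.

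The core counting argument is as follows. Fix a large $x$ and count the number $N(x)$ of indices $k$ with $\pi(kn)\le x$. On one hand, each value $m\le x$ is attained by at most (roughly) $\log(p_{m+1})/\log(\text{something})$ — actually more simply, the number of $k$ with $\pi(kn)=m$ is at most $\lceil (p_{m+1}-p_m)/n\rceil + O(1)$, but summing that is awkward; instead I would turn it around: the number of $k$ with $\pi(kn)\le x$ equals the number of $k$ with $kn < p_{x+1}$, i.e. $N(x) = \lceil p_{x+1}/n\rceil - 1 \sim p_{x+1}/n \sim x\log x/n$ by the Prime Number Theorem (since $p_{x+1}\sim x\log x$). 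So among the first $N(x)\sim x\log x / n$ positive integers $k$, the values $\pi(kn)$ range over a subset of $\{0,1,\dots,x\}$, and each value $m$ with $\pi(mn)$... wait, each \emph{integer value} $m\le x$ is hit; the multiplicity with which $m$ is hit is $\#\{k: \pi(kn)=m\}$, and the \emph{key point} is that since the map $k\mapsto \pi(kn)$ is surjective onto $\{\pi(n),\pi(n)+1,\dots,x\}$ (because jumps are bounded by $n$ but actually each increment is small and the function is eventually all of $\{\pi(n),\pi(n)+1,\ldots\}$ exactly as in the $\mathcal A_3$ discussion once $n$ is... no — for general $n$ surjectivity can fail), I should instead argue: $\#\{k\le N(x): \pi(kn)\in S\} \ge \#\{m\le x: m\in S\} = |S(x)|$ whenever every element of $S(x)$ actually lies in $\mathcal A_n$. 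That last clause is not automatic, so the correct route is: $\#\{k: \pi(kn)\le x\} = N(x)$, and these $N(x)$ indices produce values lying in $\{0,1,\dots,x\}$; hence by pigeonhole the number of \emph{distinct} values taken is at least... no, that's also not it.

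Let me restate the genuinely correct approach: for each integer $m$, let $r(m)=\#\{k\in\Z^+:\pi(kn)=m\}$. Then $\sum_{m\le x} r(m) = N(x)\sim x\log x/n$, while $r(m)\le 1 + (p_{m+1}-p_m)/n$ for each $m$, so $r(m)$ is "usually" of size about $\log m / n$ but can be larger. The claim $\mathcal A_n \supseteq \{m : r(m)\ge 1\}$ is trivial; what we need is that $S$ meets $\{m: r(m)\ge 1\}$ infinitely often. Suppose not: then $S\cap\mathcal A_n$ is finite, so for large $x$, $|S(x)|$ counts only $m\notin\mathcal A_n$, i.e. $m$ with $r(m)=0$. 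But I would bound $\#\{m\le x: r(m)=0\}$: since $\sum_{m\le x}r(m)=N(x)\sim x\log x/n$ and $r(m)\le C\log x$ for all $m\le x$ (using $p_{m+1}-p_m \le p_{x+1} \le C x\log x$ is too weak; better: the maximal prime gap below $T$ is $o(T)$, in fact $O(T^{0.6})$, so $p_{m+1}-p_m = o(p_m) = o(x\log x)$, giving $r(m)=o(x\log x /n)$, still too weak for a per-term bound). The clean bound: $\#\{m\le x : r(m)\ge 1\}\ge$ (number of values actually attained) and the values attained by $\pi(kn)$ for $k\le N(x)$ form a set whose complement in $\{0,\dots,x\}$ has size at most... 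Actually the cleanest: $\#\{m\le x: r(m)=0\} \le \#\{m\le x: p_{m+1}-p_m > n\}$ is FALSE in general. The truly robust estimate is $\#\{m\le x : r(m)\ge 1\} = \#\{$ distinct values of $\pi(kn)$ in $[0,x]\} \ge x - \#\{m\le x: \pi^{-1}\text{-gap}\}$; equivalently, $m\in\mathcal A_n$ iff there is a multiple of $n$ in $[p_m+1, p_{m+1}]$, i.e. iff $\lfloor p_{m+1}/n\rfloor \ge \lceil (p_m+1)/n\rceil$, which holds unless $p_m$ and $p_{m+1}$ lie strictly between two consecutive multiples of $n$; the number of $m\le x$ for which this fails is at most $\#\{$multiples of $n$ up to $p_{x+1}\} \le p_{x+1}/n + 1 \sim x\log x/n$ — hmm, that counts failures, not a useful upper bound directly, but note a failure at $m$ means both $p_m,p_{m+1}$ are in one block of length $n$, and distinct $m$ give disjoint such blocks, so $\#\{m\le x: m\notin\mathcal A_n\} \le p_{x+1}/n \sim x\log x/n$. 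That's still not good enough to beat $|S(x)| \gg x/\log x$ — wait, it IS: we need $|S(x)| > \#\{m\le x:m\notin\mathcal A_n\}$ infinitely often, but $x\log x/n$ dominates $x/\log x$, so this bound fails.

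So the honest obstacle, and the step I expect to be hardest, is bounding $\#\{m\le x : m\notin\mathcal A_n\}$ by something $o(x/\log x)$, or really by $o(|S(x)|)$. The right statement: $m\notin\mathcal A_n$ exactly when $p_m,p_{m+1}$ lie in the same interval $(jn,(j+1)n]$. The number of such $m\le x$ equals the number of integers $j$ with at least two primes in $(jn,(j+1)n]$ among those with $p_{x+1}$-ish size, and summing "$(\#\text{primes in block}) - 1$ over nonempty blocks" telescopes: $\#\{m\le x: m\notin\mathcal A_n\} = \pi(p_{x+1}) - \#\{j: (jn,(j+1)n]\text{ contains a prime}, j n < p_{x+1}\}$. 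Now $\#\{\text{nonempty blocks}\}$: I would show this is $\sim \pi(p_{x+1}) = x+1$ — i.e. \emph{almost every block of length $n$ below $p_{x+1}$ containing a prime contains exactly one} — which follows from the fact that a positive proportion (indeed all but $o(\cdot)$) of the $p_{x+1}/n$ blocks are empty when $n$ is fixed (primes have density $\to 0$), so among $\sim p_{x+1}/n$ blocks only $\le x+1 = \pi(p_{x+1})$ are nonempty, and the number with $\ge 2$ primes is $\le (x+1) - \#\{\text{nonempty}\}$; to bound this I'd use a Brun-type / Selberg sieve upper bound: $\#\{j: \text{block }j\text{ has}\ge 2\text{ primes}\}\ll_n x\log x\cdot (\log\log x)/(\log x)^2 = o(x/\log x)$ — more concretely, the number of pairs of primes $p<q\le T$ with $q-p<n$ is $\ll_n T/(\log T)^2$ by Selberg's sieve (the standard upper bound for primes differing by a fixed even number, summed over the $O(n)$ possible differences), and with $T=p_{x+1}\sim x\log x$ this is $\ll_n x\log x/(\log x)^2 = x/\log x \cdot O(1/\log x)\cdot O(1)$... giving $\ll_n x/\log x$. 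That matches the order of $|S(x)|$ but we assumed $|S(x)|/(x/\log x)\to\infty$, so for large $x$, $|S(x)|$ exceeds $\#\{m\le x: m\notin\mathcal A_n\}$, forcing $S(x)\cap\mathcal A_n\neq\emptyset$; and since $x$ is arbitrary this gives infinitely many elements of $S$ in $\mathcal A_n$. Thus the plan is: (i) Prime Number Theorem to get $N(x)\sim x\log x/n$ and to translate membership in $\mathcal A_n$ into a statement about prime gaps relative to arithmetic progressions mod $n$; (ii) the telescoping identity expressing $\#\{m\le x: m\notin\mathcal A_n\}$ as a count of length-$n$ blocks containing two or more primes; (iii) a Selberg/Brun sieve upper bound showing this count is $\ll_n x/\log x$; (iv) compare with the hypothesis $|S(x)| \gg_x (x/\log x)\cdot\omega(x)$ with $\omega(x)\to\infty$ to conclude $S\cap\mathcal A_n$ is infinite. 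Step (iii) — getting the sieve bound to beat exactly the barrier $x/\log x$ — is the crux; if the sieve only gives $O(x/\log x)$ without a saving, one still wins because the hypothesis gives a genuine $\to\infty$ factor.
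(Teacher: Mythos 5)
Your final extracted argument is correct and is essentially the paper's proof: both reduce to showing $|\{m\le x:\ m\notin\mathcal A_n\}|\ll_n x/\log x$ via the sieve upper bound $\ll_n T/\log^2 T$ for the number of prime pairs below $T$ differing by less than $n$, applied with $T=p_{x+1}\sim x\log x$ (you index the count by the value $m$, the paper by the multiple $kn$, but the quantity is the same), and then compare with the hypothesis on $|S(x)|$. The only repair needed is at the very end: ``$S(x)\cap\mathcal A_n\neq\emptyset$ for every large $x$'' does not by itself yield infinitude, but the inequalities you already have give the quantitative statement $|S(x)\cap\mathcal A_n|\ge |S(x)|-C_n\,x/\log x\to+\infty$, which does.
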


If $S=\{a\in\Z^+:\ a\eq r\pmod m\}$ with $m,r\in\Z^+$, then
$$\lim_{x\to+\infty}\f{|S(x)|}x=\f1m\ \ \t{and hence}\ \ \lim_{x\to+\infty}\f{|S(x)|}{x/\log x}=+\infty.$$
Thus Theorem \ref{theorem0} yields the following corollary.

\begin{corollary} For any $n,m,r\in \Z^{+}$, there are infinitely many $a\in \mathcal{A}_n$ with $a\equiv r\pmod{m}$.\end{corollary}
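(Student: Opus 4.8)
The plan is to derive the corollary as an immediate consequence of Theorem \ref{theorem0}. Fix $n,m,r\in\Z^{+}$ and put
$$S=\{a\in\Z^{+}:\ a\equiv r\pmod m\}.$$
The only thing to verify is the hypothesis of Theorem \ref{theorem0}, namely that $|S(x)|/(x/\log x)\to+\infty$ as $x\to+\infty$. For all sufficiently large $x$ we have $|S(x)|=x/m+O(1)$, since the positive integers $\le x$ lying in the residue class $r$ modulo $m$ form a finite arithmetic progression with common difference $m$; consequently
$$\frac{|S(x)|}{x/\log x}=\Big(\frac1m+o(1)\Big)\log x\longrightarrow+\infty.$$
Hence Theorem \ref{theorem0} applies with this choice of $S$ and tells us that $\mathcal{A}_n$ contains infinitely many elements of $S$, i.e.\ infinitely many $a\in\mathcal{A}_n$ with $a\equiv r\pmod m$. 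This is precisely the assertion of the corollary.

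No real obstacle remains once Theorem \ref{theorem0} is in hand; the point worth stressing is merely that the growth threshold $x/\log x$ in that theorem is low enough to be beaten by any set of positive lower density, and in particular by every arithmetic progression, whose counting function grows like $x/m$. (The same reasoning shows more generally that $\mathcal{A}_n$ meets infinitely often any $S\subseteq\Z^{+}$ for which $|S(x)|\gg x/(\log x)^{1-\varepsilon}$ for some fixed $\varepsilon>0$, because then $|S(x)|\log x/x\gg(\log x)^{\varepsilon}\to+\infty$.)
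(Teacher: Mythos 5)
Your proposal is correct and follows exactly the paper's route: you take $S$ to be the arithmetic progression $\{a\in\Z^{+}:a\equiv r\pmod m\}$, note that $|S(x)|=x/m+O(1)$ so that $|S(x)|/(x/\log x)\to+\infty$, and invoke Theorem \ref{theorem0}. This is precisely how the paper derives the corollary.
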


In contrast, Sun \cite[Conjecture 2.2]{S15} conjectured that for each $n\in\Z^+$ we have $n\mid\pi(kn)$
for some $k=1,\ldots,p_n$, and also $\{\pi(kn):\ k=1,\ldots,2p_n\}$ contains a complete system of residues modulo $n$.

As usual, for any $r\in \Z^{+}$, we call $n\in\Z^+$ a $P_r$-number if it is a product of at most $r$ primes. It is known
(cf. \cite[Theorem 6.4]{T}) that the number of $P_2$-numbers up to $X$ is $\gg \frac{X}{\log X}\log\log X$ for $X>1$. So Theorem \ref{theorem0} has the following consequence.

\begin{corollary} For any $n\in \Z^{+}$, the set $\mathcal{A}_n$ contains infinitely many $P_2$-numbers.\end{corollary}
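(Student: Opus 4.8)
The plan is to obtain this corollary as an immediate consequence of Theorem \ref{theorem0}, applied with $S$ taken to be the set of all $P_2$-numbers. Since Theorem \ref{theorem0} already does the heavy lifting, the only point that requires checking is that this particular $S$ meets the density hypothesis
$$\lim_{x\to+\infty}\frac{|S(x)|}{x/\log x}=+\infty.$$

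To verify this, I would quote the classical sieve estimate (see \cite[Theorem 6.4]{T}) that the number of $P_2$-numbers not exceeding $X$ is $\gg \frac{X}{\log X}\log\log X$ for $X>1$. Writing $S$ for the set of $P_2$-numbers, this reads $|S(x)|\gg \frac{x}{\log x}\log\log x$, and therefore
$$\frac{|S(x)|}{x/\log x}\gg \log\log x\to+\infty \quad\text{as } x\to+\infty ,$$
so the hypothesis of Theorem \ref{theorem0} is satisfied. Applying that theorem with this $S$, we conclude that for every $n\in\Z^+$ the set $\mathcal{A}_n$ contains infinitely many elements of $S$, i.e., infinitely many $P_2$-numbers, which is the assertion.

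There is no real obstacle remaining at this stage: all of the arithmetic substance is packaged into Theorem \ref{theorem0}, whose proof must exploit that consecutive differences $\pi((k+1)n)-\pi(kn)$ are bounded in terms of $n$, so that $\mathcal A_n$ omits only finitely many integers below each level and hence must absorb any set that is appreciably denser than the primes. The one genuine input needed for the corollary itself is the lower bound for the counting function of $P_2$-numbers, and that is standard. It is worth noting that the surplus factor $\log\log X$ is exactly what makes the argument work: the primes have counting function only $\sim X/\log X$, so Theorem \ref{theorem0} does not apply to $S$ equal to the set of primes, consistent with the fact, noted above, that it is still unknown whether $\mathcal A_4$ contains infinitely many primes.
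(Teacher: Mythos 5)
Your proof is correct and is exactly the paper's argument: the authors also deduce the corollary from Theorem \ref{theorem0} by taking $S$ to be the set of $P_2$-numbers and citing the bound $|S(x)|\gg \frac{x}{\log x}\log\log x$ from \cite[Theorem 6.4]{T} to verify the density hypothesis. Nothing is missing.
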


 The following conjecture extends a conjecture of Hardy and Littlewood concerning twin primes.

\begin{conjecture}[P. T. Bateman and R. A. Horn \cite{BH}]\label{conj} For $N\in\Z^+$ let $V(N)$ denote the number of positive integers $n\le N$ with $4n+1$ and $4n+3$ twin prime. Then
\begin{align*}V(N)=4\mathfrak{S}\frac{N}{\log^2 N}\Big(1+o(1)\Big)\ \ \t{as}\ N\to+\infty,\end{align*}
where the twin prime constant $\mathfrak{S}$ is given by
\begin{align*}\mathfrak{S}=\prod_{p>2}\Big(1-\frac{1}{(p-1)^2}\Big)\approx 0.6601618\end{align*}
with $p$ in the product runs over all odd primes.
\end{conjecture}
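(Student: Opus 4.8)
The assertion is a quantitative twin-prime statement restricted to a fixed linear pattern, so a fully rigorous proof lies beyond present-day technology; what one can honestly supply is the Hardy--Littlewood circle-method derivation that produces the predicted main term together with the constant $4\mathfrak{S}$. The plan is to first study the von Mangoldt--weighted count
\[
\Psi(N):=\sum_{n\le N}\Lambda(4n+1)\,\Lambda(4n+3),
\]
and then to recover $V(N)$ from it by partial summation. Writing $e(\alpha)=e^{2\pi i\alpha}$ and introducing the exponential sum $S(\alpha)=\sum_{m\le 4N+2}\Lambda(m)e(m\alpha)$ together with its restriction $S_1(\alpha)$ to the progression $m\eq1\ (\mo 4)$, one detects the shift $4n+3=(4n+1)+2$ and the congruence $4n+1\eq1\ (\mo 4)$ by orthogonality of additive characters, so that
\[
\Psi(N)=\int_0^1 S(\alpha)\,\overline{S_1(\alpha)}\,e(-2\alpha)\,d\alpha.
\]

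The first step is the major-arc analysis. On the arcs surrounding rationals $a/q$ with $q$ small, the Siegel--Walfisz theorem supplies asymptotics for both $S(\alpha)$ and $S_1(\alpha)$, and assembling the resulting contributions to the integral yields a main term of the shape $\mathfrak{G}\,N$, where $\mathfrak{G}$ is the singular series
\[
\mathfrak{G}=\prod_{p}\f{1-\omega(p)/p}{(1-1/p)^2},\qquad \omega(p):=\#\{n\bmod p:\ (4n+1)(4n+3)\eq0\ (\mo p)\}.
\]
The second step is the purely local computation of $\mathfrak{G}$. Since $4n+1$ and $4n+3$ are always odd one has $\omega(2)=0$, which forces the factor at $p=2$ to equal $1/(1-1/2)^2=4$; for every odd prime $p$ the two linear forms have distinct roots modulo $p$, so $\omega(p)=2$ and the local factor is $(1-2/p)/(1-1/p)^2=1-1/(p-1)^2$. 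Multiplying out gives
\[
\mathfrak{G}=4\prod_{p>2}\Big(1-\f1{(p-1)^2}\Big)=4\mathfrak{S}.
\]

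The third step removes the logarithmic weights. On the pairs that are actually counted, both entries being prime, one has $\Lambda(4n+1)\Lambda(4n+3)=\log(4n+1)\log(4n+3)\sim\log^2N$ (the prime-power contributions being of lower order), so partial summation converts $\Psi(N)\sim 4\mathfrak{S}\,N$ into
\[
V(N)\sim\f{4\mathfrak{S}\,N}{\log^2(4N)}\sim\f{4\mathfrak{S}\,N}{\log^2N},
\]
which is exactly the stated formula. The main obstacle is that this heuristic cannot be made rigorous: the problem is binary, so the minor-arc integral of $S(\alpha)\overline{S_1(\alpha)}$ cannot be shown to be of smaller order than the main term by any known method, and controlling it is tantamount to resolving the twin-prime conjecture itself. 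Sieve techniques---Selberg's upper-bound sieve, or the Brun-type sieve underlying the $P_2$ results above---do yield upper and lower bounds for $V(N)$ of the correct order of magnitude $N/\log^2N$, but they cannot isolate the precise constant $4\mathfrak{S}$, which is precisely why the statement remains a conjecture.
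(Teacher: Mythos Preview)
The statement is labeled a \emph{conjecture} in the paper, and the paper offers no proof of it whatsoever; it is simply assumed as a hypothesis in Theorem~\ref{theorem2}. So there is nothing in the paper to compare your argument against.

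You are aware of this yourself: you say explicitly that ``a fully rigorous proof lies beyond present-day technology'' and that the minor-arc contribution ``cannot be shown to be of smaller order than the main term by any known method.'' What you have written is therefore not a proof but the standard Hardy--Littlewood heuristic explaining \emph{why} the predicted constant should be $4\mathfrak{S}$. As such it is accurate and well-organized: the singular-series computation (the local factor $4$ at $p=2$ from $\omega(2)=0$, and $1-1/(p-1)^2$ at odd primes from $\omega(p)=2$) is correct, and the passage from the $\Lambda$-weighted sum to the unweighted count by partial summation is the right maneuver. But none of this constitutes a proof of the conjecture, and the paper neither claims nor needs one---it only invokes the conjecture as an assumption.
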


Now we state our second theorem.
\begin{theorem}\label{theorem2}Assuming the truth of Conjecture \ref{conj}, there are infinitely many primes in $\mathcal{A}_4$.\end{theorem}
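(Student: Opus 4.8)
We need to show, under the Bateman--Horn/Hardy--Littlewood twin-prime asymptotic of Conjecture \ref{conj}, that $\mathcal A_4=\{\pi(4k):k\in\Z^+\}$ contains infinitely many primes. The idea is to exploit the fact that when $4k+1$ and $4k+3$ are \emph{both} prime, the function $\pi$ jumps by $2$ as $x$ crosses this pair: $\pi(4k)=\pi(4k+1)-1$ and $\pi(4k+4)\ge\pi(4k+3)+1=\pi(4k+1)+2$. Thus around each twin prime pair of the special shape $(4k+1,4k+3)$ the value $\pi(4k)$ is ``skipped over'' in a controlled way, and conversely such twin pairs produce many distinct values in $\mathcal A_4$. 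The plan is to count: if $\mathcal A_4$ contained only finitely many primes, the prime values of $\pi(4k)$ for $k\le N$ would be bounded in number, yet we will produce, from the $V(N)$ twin pairs, enough forced prime values to contradict this.

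More concretely, here is the mechanism I would use. For $k\in\Z^+$ with $4k+1,\,4k+3$ twin primes, set $m_k=\pi(4k)=\pi(4k+1)-1$. Then $\pi(4(k+1))=\pi(4k+4)\ge m_k+2$, so as $k$ ranges over the set $\mathcal T$ of such ``special'' twin-prime indices, the values $m_k$ are strictly increasing and in fact $m_{k'}\ge m_k+2$ whenever $k,k'\in\mathcal T$ with $k'>k$; hence the map $k\mapsto m_k$ is injective on $\mathcal T$ and $m_k\le \pi(4k)\le \pi(4N)$ for $k\le N$. By Conjecture \ref{conj}, $|\mathcal T\cap[1,N]|=V(N)\sim 4\mathfrak S N/\log^2 N$. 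Now the key observation: for each $k\in\mathcal T$, among the \emph{consecutive} values $\pi(4k)=m_k$ and $\pi(4(k-1))$, and more generally in any window of $\pi(4j)$-values straddling a twin pair, a prime value is ``almost'' forced because $\pi$ takes every integer value between consecutive primes but the arithmetic progression $4j$ hits the integers with density $1/4$ among all integers and with a predictable local pattern near a twin pair. The cleanest route is: consider the integer $t=\pi(4k+1)=m_k+1$. Since $4k+1$ is prime, $t=\pi(4k+1)$ is the index of a prime, i.e.\ $p_t=4k+1$. The statement we want is that $t-1=\pi(4k)$ is prime for infinitely many such $k$; equivalently, $p_t-1\equiv 0\pmod 4$ (automatic here since $4k+1$) and $\pi(4k)=t-1$ prime.

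The hard part—and where Conjecture \ref{conj} really enters quantitatively—is to convert the twin-prime \emph{count} into a count of prime values in $\mathcal A_4$. I would argue by a second counting/sieve step: among the $V(N)$ twin-prime indices $k\le N$, the associated values $m_k=\pi(4k)$ form a set of $\gg N/\log^2 N$ integers, all at most $\pi(4N)\sim 4N/\log(4N)\sim 4N/\log N$; so these $m_k$ occupy a positive proportion (in fact proportion $\asymp 1/\log N$, which tends to $0$, so a more careful argument is needed) of an interval of length $\sim 4N/\log N$. This density is not by itself enough to force a prime among the $m_k$, so the actual proof must instead track \emph{which} residues/values get hit: the point is that between $\pi(4k)$ and $\pi(4(k+1))$ one or two integers are realized as $\pi(4j)$ for no $j$, and the realized values of $\pi(4j)$, $j\le N$, are exactly $\{0,1,\dots,\pi(4N)\}$ minus a ``defect set'' of size $\pi(4N)-N$ coming from the jumps of size $\ge 2$. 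Primes up to $\pi(4N)$ number $\sim \pi(4N)/\log\pi(4N)\sim (4N/\log N)/\log(4N/\log N)$, which dominates any fixed bound, so if only finitely many primes lie in $\mathcal A_4$ then \emph{all but finitely many} primes $q\le \pi(4N)$ must lie in the defect set; the defect set has size $\pi(4N)-N$, and I would show, using Conjecture \ref{conj} to control the contribution of the size-$\ge 2$ jumps (each twin pair $(4k+1,4k+3)$ contributes a jump of at least $2$ to $\pi(4(k-1))\to\pi(4k)$ or $\pi(4k)\to\pi(4(k+1))$), that the defect set cannot contain almost all primes up to $\pi(4N)$—a parity/size contradiction. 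The main obstacle is making this last comparison rigorous: bounding the number of primes that can be ``missed'' versus the guaranteed number of size-$\ge2$ jumps localized near twin pairs, and ensuring the twin-prime asymptotic is strong enough (it is, since $V(N)\to\infty$ suffices to produce infinitely many forced instances once the skeleton is set up). I would close by remarking that only the \emph{qualitative} content ``$V(N)\to\infty$'' of Conjecture \ref{conj} is actually used, so any lower bound of the correct order—or even just infinitude of twin primes of the form $4k+1,4k+3$—would suffice.
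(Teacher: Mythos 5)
Your skeleton---identify the values missed by $k\mapsto\pi(4k)$ with the twin pairs $(4k+1,4k+3)$, and compare the size of this ``defect set'' with the number of primes up to $X=\pi(4N)$---is the same strategy as the paper's, but the proposal stops exactly where the proof has to happen, and the two quantitative claims you do commit to are wrong. First, the defect set does not have size $\pi(4N)-N$: that quantity is negative, since $\pi(4N)\sim 4N/\log N\ll N$. The correct count comes from observing that for $k>1$ the interval $(4k-4,4k]$ contains at most two primes (only $4k-3$ and $4k-1$ can be prime), so each jump $\pi(4k)-\pi(4k-4)$ is $0$, $1$ or $2$, and each jump of size $2$ skips exactly one value; hence
$$|\mathcal{A}_4^{c}(X)|=1+|\{1\le k<N:\ 4k+1\ \text{and}\ 4k+3\ \text{are both prime}\}|=1+V(N-1).$$

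Second, and more seriously, your closing remark that ``only the qualitative content $V(N)\to\infty$ is actually used'' has the logic backwards. Twin pairs are what \emph{create} missed values: the more twin pairs, the larger the defect set, and the harder it is for $\mathcal{A}_4$ to contain primes. (If there were no twin pairs of this shape at all, $\mathcal{A}_4$ would contain every sufficiently large integer and the theorem would be trivial.) What the argument actually needs from Conjecture \ref{conj} is the \emph{upper} bound $V(N)\le(4\mathfrak{S}+o(1))N/\log^2N$ with the specific constant $4\mathfrak{S}<4$: since $\pi(X)\sim X/\log X\sim 4N/\log^2 N$, one gets $|\mathcal{A}_4^{c}(X)|\sim\mathfrak{S}\,\pi(X)$, and only because $\mathfrak{S}<1$ can one conclude that at least $(1-\mathfrak{S}+o(1))\pi(X)$ primes up to $X$ lie in $\mathcal{A}_4$. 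A lower bound on $V(N)$, or bare infinitude of such twin pairs, yields nothing here. So the step you flag as ``the main obstacle'' is in fact the entire proof, and the heuristic you propose for overcoming it points in the wrong direction.
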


We are going to prove Theorems \ref{theorem0} and \ref{theorem2} in Sections 2 and 3 respectively.

\section{Proof of Theorem \ref{theorem0}}

\setcounter{lemma}{0}
\setcounter{theorem}{0}
\setcounter{corollary}{0}
\setcounter{remark}{0}
\setcounter{equation}{0}

For $A\subseteq \Z^{+}$, we write $A^{c}$ for $\Z^{+}\setminus A$, the complement of $A$.

\begin{lemma}\label{lemma1} For integers $K\ge3$, we have
\begin{equation}|\label{2.1}\mathcal{A}_n^{c}(X)|\ll_n \frac{X}{\log X},\end{equation}
where $X=\pi(Kn)$ with $n\in\Z^+$. \end{lemma}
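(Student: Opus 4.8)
The plan is to bound the number of "missed" values, i.e. the positive integers $m \le X$ that do \emph{not} arise as $\pi(kn)$ for any $k$, by observing that such missed values must sit in the gaps of the sequence $\pi(n), \pi(2n), \pi(3n), \ldots$. First I would record the elementary fact that for every $k \in \Z^+$ we have $\pi((k+1)n) - \pi(kn) \le n$ (there are at most $n$ primes in an interval of length $n$), so the increments of the nondecreasing sequence $(\pi(kn))_{k\ge 1}$ are bounded by $n$. Consequently, between two consecutive terms $\pi(kn)$ and $\pi((k+1)n)$ at most $n-1$ integers are skipped, and every such skipped integer is within distance $n$ of a value that \emph{is} attained. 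More precisely, if $m \le X = \pi(Kn)$ and $m \notin \mathcal{A}_n$, then there is a unique $k < K$ with $\pi(kn) < m < \pi((k+1)n)$, and one can attach to $m$ the index $k$; since at most $n-1$ values of $m$ share the same $k$, we get
\begin{equation*}
|\mathcal{A}_n^c(X)| \le (n-1)\,\#\{k : 1 \le k \le K\} = (n-1)\bigl(K - O(1)\bigr).
\end{equation*}

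The remaining task is therefore to express $K$ in terms of $X = \pi(Kn)$ and check that $K \ll_n X/\log X$. By the Prime Number Theorem, $\pi(y) \sim y/\log y$, so from $X = \pi(Kn)$ we get $Kn \sim X \log(Kn)$; in particular $K \asymp_n X \log X / X \cdot (\text{something})$— more carefully, $Kn/\log(Kn) \asymp X$ gives $Kn \ll_n X \log(Kn)$, and since $\log(Kn) \ll \log X$ (as $Kn \le C_n X \log X$ bootstraps from a first crude bound, or simply since $Kn$ and $X$ are polynomially related), we obtain $K \ll_n X \log X / n = O_n(X\log X)$. Wait—that is the wrong direction; let me instead use the lower bound $\pi(y) \gg y/\log y$: from $X = \pi(Kn) \gg Kn/\log(Kn) \gg_n K/\log K$ we get $K \ll_n X \log X$, which is too weak. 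The correct move is: $\pi(y) \ll y/\log y$ is false, so one uses $\pi(Kn) \gg Kn/\log(Kn)$, i.e. $X \gg_n Kn/\log(Kn)$, hence $K \log(Kn) \ll_n X$, hence $K \ll_n X/\log(Kn) \ll_n X/\log X$ once we note $\log(Kn) \gg \log X$ (which holds since $X = \pi(Kn) \le Kn$, so $\log X \le \log(Kn)$). This last inequality $\log X \le \log(Kn)$ is immediate from $\pi(y)\le y$, and it is exactly what makes the bound come out as $X/\log X$ rather than something larger.

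So the key steps, in order, are: (1) increment bound $\pi((k+1)n)-\pi(kn)\le n$; (2) the pigeonhole/gap argument giving $|\mathcal{A}_n^c(X)| \le (n-1)K$; (3) the Prime Number Theorem lower bound $\pi(Kn) \gg_n Kn/\log(Kn)$ combined with the trivial $\log(Kn)\ge \log \pi(Kn)=\log X$ to deduce $K \ll_n X/\log X$; (4) combine (2) and (3). I expect step (3) to be the only subtle point: one must be careful about the direction of the inequality in the Prime Number Theorem and use $\log X \le \log(Kn)$ to avoid losing the logarithmic factor; everything else is elementary counting. The hypothesis $K \ge 3$ is presumably there only to ensure $Kn$ is large enough that the asymptotic estimates for $\pi$ apply cleanly and that $X=\pi(Kn)\ge 1$, so no further care is needed there.
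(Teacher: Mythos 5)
Your step (3) contains a fatal algebra error, and fixing it reveals that your whole approach cannot give the stated bound. From $X=\pi(Kn)\gg Kn/\log(Kn)$ one gets $Kn\ll X\log(Kn)$, i.e.\ $K\ll_n X\log(Kn)$ --- not $K\log(Kn)\ll_n X$ as you wrote (you moved the logarithm to the wrong side). In fact the Prime Number Theorem gives $K\asymp_n X\log X$, so your pigeonhole bound $|\mathcal{A}_n^{c}(X)|\le (n-1)K$ only yields $|\mathcal{A}_n^{c}(X)|\ll_n X\log X$, which is weaker than the trivial bound $|\mathcal{A}_n^{c}(X)|\le X$. No amount of care with the direction of the PNT inequality rescues this: counting \emph{all} indices $k\le K$ is simply too crude.

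The missing idea is that an integer is skipped between $\pi(kn-n)$ and $\pi(kn)$ only when the interval $(kn-n,\,kn]$ contains \emph{at least two} primes, so
\begin{equation*}
|\mathcal{A}_n^{c}(X)|\ \le\ n\,|\mathcal{K}|,\qquad
\mathcal{K}=\{1\le k\le K:\ \pi(kn)-\pi(kn-n)\ge 2\},
\end{equation*}
and one must then show that $\mathcal{K}$ is sparse. The paper does this by observing that each $k\in\mathcal{K}$ produces a pair of primes $p<q\le Kn$ with $2\le q-p<n$, so $|\mathcal{K}|\le\sum_{2\le h<n}\pi_h(Kn)$, where $\pi_h$ counts prime pairs at distance $h$; the standard sieve upper bound $\pi_h(x)\ll_h x/\log^2 x$ then gives $|\mathcal{K}|\ll_n K/\log^2 K$, and combining with $X\sim Kn/\log(Kn)$ (so that $K/\log^2K\asymp_n X/\log X$) yields the lemma. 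This sieve input is the heart of the proof and is entirely absent from your argument.
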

\begin{proof} Note that
\begin{equation}\label{equation1}\begin{aligned}|\mathcal{A}_n^{c}(X)|=&\l|\l\{a\in\Z^+:\ a\in\bigcup_{k=1}^K(\pi((k-1)n),\pi(kn))\r\}\r|
\\=&\sum_{\substack{1\le k\le K\\ \pi(kn)-\pi(kn-n)\ge 2}}\Big(\pi(kn)-\pi(kn-n)-1\Big).
\end{aligned}\end{equation}
Since $\pi(kn)-\pi(kn-n)-1\le n$, we have
\begin{align*}|\mathcal{A}_n^{c}(X)|\leq n|\mathcal{K}|,\end{align*}
where
\begin{align*}\mathcal{K}=\{1\le k\le K:\ \pi(kn)-\pi(kn-n)\ge2\}.\end{align*}

For each $k\in \mathcal K$, there exist two primes $p$ and $q$  such that
$kn-k<p<q\le kn$ and hence $2\le q-p<n$. Thus
\begin{align*}|\mathcal{K}|\le \sum_{2\le h<n}|\{p\ls Kn:\ p\ \t{and}\ p+h\ \t{are both prime}\}|.\end{align*}
It is well known (cf. \cite[Theorem 6.7]{ik2004}) that
\begin{align*}\pi_h(Kn):=|\{p\ls Kn:\ p\ \t{and}\ p+h\ \t{are both prime}\}|
\ll_{h} \frac{Kn}{\log^2(Kn)},\end{align*}
where the implied constant may depend on $h\ge2$. Combining the above, we obtain
\begin{align}\label{bound}|\mathcal{A}_n^{c}(X)|\leq n|\mathcal{K}|\ll_{n} \frac{K}{\log^2 K}.\end{align}

In view of the Prime Number Theorem,
\begin{align}\label{asym}X= \frac{Kn}{\log(Kn)}\Big(1+o(1)\Big).\end{align}
Now \eqref{2.1} follows from  \eqref{bound} and \eqref{asym}. This concludes the proof.
\end{proof}

\noindent\textit{Proof of Theorem \ref{theorem0}.}
By Lemma \ref{lemma1}, there is a constant $C_n>0$ such that for any $K\in\{3,4,\ldots\}$ we have
\begin{align}\label{bound4}|\mathcal{A}_n^{c}(X)|\le C_n\frac{X}{\log X},\end{align}
where $X=Kn$.
As $\lim_{x\to+\infty}|S(x)|/(x/\log x)=+\infty$, if $K\in\Z^+$ is large enough then
\begin{equation}\label{2Cn}\f{|S(X)|}{X/\log X}\ge2C_n.\end{equation}.

Let $K\in\{3,4,\ldots\}$ be large enough so that \eqref{2Cn} holds.
Then, for $X=\pi(Kn)$ we have
$$\frac{|S_1(X)|}{X/\log X}+\frac{|S_2(X)|}{X/\log X}=\frac{|S(X)|}{X/\log X}\ge 2C_n,$$
where $S_1=S\cap \mathcal{A}_n$ and $S_2=S\cap \mathcal{A}_n^{c}$.
As $$\frac{|S_2(X)|}{X/\log X}\le \frac{|\mathcal{A}_n^{c}(X)|}{X/\log X}\le C_n$$
 by \eqref{bound4}, we obtain
 $$|S_1(X)|\ge C_n\frac{X}{\log X}.$$

 In view of the above, $\lim_{x\to+\infty}|S_1(x)|=+\infty$. So $\mathcal A_n$ contains infinitely many elements of $S$. This completes the proof of Theorem \ref{theorem0}. \qed
\bigskip

\section{Proof of Theorem \ref{theorem2}}

\setcounter{lemma}{0}
\setcounter{theorem}{0}
\setcounter{corollary}{0}
\setcounter{remark}{0}
\setcounter{equation}{0}

\medskip
\noindent\textit{Proof of Theorem \ref{theorem2}.} Let $X=\pi(4K)$ with $K\in\{3,4,\ldots\}$.
Applying \eqref{equation1} with $n=4$, we get
\begin{align*}|\mathcal{A}_4^{c}(X)|=\sum_{\substack{1\le k\le K\\ \pi(4k)-\pi(4k-4)\ge 2}}\Big(\pi(4k)-\pi(4k-4)-1\Big).\end{align*}
For each integer $k>1$, the interval $(4k-4,4k]$ contains at most two primes.
Note also that $\pi(4)-\pi(0)=2$. So we have
\begin{align*}|\mathcal{A}_4^{c}(X)|=1+|\mathcal{V}|,\end{align*}
where\begin{align*}\mathcal{V}=\{1\le k<K:\ \pi(4k+4)-\pi(4k)=2\}.\end{align*}

For any $k=1,\ldots,K-1$, clearly $\pi(4k+4)-\pi(4k)= 2$ if and only if both $4k+1$ and $4k+3$ are twin prime. Under Conjecture \ref{conj}, we have
\begin{align*}|\mathcal{V}|=V(K-1)=4\mathfrak{S}\frac{K}{\log^2 K}\Big(1+o(1)\Big)\end{align*}
and hence
\begin{align*}|\mathcal{A}_4^{c}(X)|=4\mathfrak{S}\frac{K}{\log^2 K}\Big(1+o(1)\Big).\end{align*}
 By the Prime Number Theorem,
\begin{align*}X= \frac{4K}{\log K}\Big(1+o(1)\Big)\ \ \t{and}\ \ \pi(X)=\f{X}{\log X}(1+o(1)).\end{align*}
Thus
\begin{align*}\pi(X)-|\mathcal{A}_4^{c}(X)|=(1-\mathfrak{S})\frac{X}{\log X}\Big(1+o(1)\Big).\end{align*}

Note that $\mathfrak{S}<1$. By the above,
$$|\{p\le X:\ p \ \t{is a prime in}\ \mathcal A_4\}|
\ge\pi(X)-|\mathcal{A}_4^{c}(X)|\to+\infty$$
as $X=4K\to+\infty$.
So $\mathcal{A}_4$ contains infinitely many primes. This concludes the proof. \qed

\bigskip

\end{document}